\documentclass[12pt]{amsart}

\usepackage{geometry}
\usepackage{amsthm}
\usepackage{amssymb}
\usepackage{microtype}
\usepackage[dvipsnames]{xcolor}
\usepackage[unicode,colorlinks=true,linktocpage=true,citecolor=Emerald,linkcolor=Fuchsia]{hyperref}
\usepackage[capitalise]{cleveref} 
\usepackage{not_clever}
\usepackage{tikz-cd}
\usepackage{mathrsfs}
\theoremstyle{plain}
\newtheorem{theorem}{Theorem}[section]
\newtheorem{proposition}[theorem]{Proposition}
\newtheorem{lemma}[theorem]{Lemma}

\theoremstyle{definition}
\newtheorem{example}[theorem]{Example}

\newcommand{\F}{\mathcal{F}}
\newcommand{\res}{\operatorname{res}}
\newcommand{\tr}{\operatorname{tr}}
\newcommand{\im}{\operatorname{im}}
\newcommand{\Aut}{\operatorname{Aut}}

\newcommand{\gen}[1]{\langle #1 \rangle}
\renewcommand{\phi}{\varphi}
\newcommand{\xra}{\xrightarrow}

\numberwithin{equation}{section}
\makeatletter
\let\c@theorem\c@equation
\makeatother

\begin{document}

\title{Splitting the center of a Sylow subgroup} 

\author{George Glauberman}
\email{gglauber@uchicago.edu}

\author{Justin Lynd}
\email{lynd@louisiana.edu}

\thanks{The authors would like to thank the Isaac Newton Institute for
Mathematical Sciences, Cambridge, for support and hospitality in July 2022
during the programme Groups, representations and applications where work on
this paper was undertaken. This work was supported by EPSRC grant no
EP/K032208/1. In addition, GG was partially supported by Simons Collaboration
grant 426819, and JL was partially supported by NSF Grant DMS-1921550 and
Simons Foundation Grant SFI-MPS-TSM-0001418. We thank these institutions for
their support.}

\begin{abstract}
Suppose $p$ is a prime and $S$ is a Sylow $p$-subgroup of a finite group $G$. 
If $S$ is normal in $G$, then $Z(S)$ is the direct product of $S \cap Z(G)$ with $[Z(S), G]$. 
We prove an analogous result for all groups except in some cases where $p=2$ and $G$ is not solvable, where we have counterexamples. 
We also extend this result to fusion systems.
\end{abstract}

\maketitle

\section{Introduction}
A finite group $G$ is said to split over a normal subgroup $N$ if there exists a complement $H$ to $N$ in $G$, i.e., a subgroup $H$ such that $HN = G$ and $H \cap N = 1$. 
In this note, we show if $S$ is a Sylow $p$-subgroup of $G$, then $Z(S)$ splits over a subgroup closely related to $Z(S) \cap Z(G)$, except for some cases when $p = 2$ and $G$ is not solvable.
The proof of this is not so difficult modulo known results, but the generality in which it holds initially surprised us. 
Counterexamples at the prime $2$ also seem to be fairly restricted and suggest that it might hold in wider generality than what we prove here.

In order to motivate the result, consider the special case where $S$ is normal in $G$.
Then $Z(S)$ is characteristic in $S$ and hence normal in $G$.
Since $S$ centralizes $Z(S)$, conjugation in $G$ induces an action of the $p'$-group $G/S$ on $Z(S)$. 
Hence, by results on coprime action \cite[Theorem~5.2.3]{Gorenstein1980},
\begin{equation}
Z(S) = (Z(S) \cap Z(G)) \times [Z(S), G]. 
\label{S normal splitting}
\end{equation}
Of course it is possible that one of the factors is just the identity subgroup.

Assume from now on the general situation that 
\begin{equation}
\text{$G$ is a finite group, $p$ is a prime, and $S$ is a Sylow $p$-subgroup of $G$.}
\label{gps}
\end{equation}
As usual, let $x^g$ and $^gx$ denote the conjugates $g^{-1}xg$ and $gxg^{-1}$ for $x \in S$ and $g \in G$. 
Our main result is a generalization of \eqref{S normal splitting}. 
An element $x$ of $S$ is \emph{weakly closed} in $S$ with respect to $G$ if the only conjugate of $x$ in $G$ that lies in $S$ is $x$ itself, i.e., 
\[
\text{$x^g = x$ whenever $g \in G$ and $x^g \in S$.}
\]

Let $x, y$ be two weakly closed elements, $z = xy$, and $g \in G$ with $z^g \in S$. 
Then $\gen{^gx,{}^gy} \leq C_G(z)$, so by Sylow's theorem there is $h \in C_G(z)$ with $^h\gen{^gx,{}^gy} \leq S$. 
Thus ${ }^{hg}z = xy = z$ by weak closure of $x$ and $y$, and so $z = z^g$. 
This shows that the collection $W_G(S)$ of all elements that are weakly closed in $S$ with respect to $G$ is a subgroup of $Z(S)$. 

Recall that $O_p(G)$ denotes the largest normal $p$-subgroup of $G$, 
$O_{p'}(G)$ denotes the largest normal $p'$-subgroup of $G$, and $O_{p',p}(G)$ and $Z^*_p(G)$ are the normal subgroups of $G$ that contain $O_{p'}(G)$ and satisfy $O_{p',p}(G)/O_{p'}(G) = O_p(G/O_{p'}(G))$ and $Z_p^*(G)/O_{p'}(G) = Z(G/O_{p'}(G))$, respectively.  

Our main result asserts that in many cases, $W_G(S)$ is a direct factor of $Z(S)$. 
For example, it is easy to see that if $S$ is normal in $G$, then $W_G(S)$ is just the direct factor $Z(S) \cap Z(G)$ in \eqref{S normal splitting}. 

\begin{theorem}
Assume \eqref{gps} and one of the following:
\begin{enumerate}
\item $p$ is odd, or
\item $Z(S) \leq O_{2',2}(G)$, or
\item $G$ is solvable. 
\end{enumerate}
Then $W_G(S)$ is a direct factor of $Z(S)$. 
\label{thm WGS}
\end{theorem}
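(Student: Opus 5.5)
The plan is to reduce to the model case \eqref{S normal splitting} in two stages. First we identify $W_G(S)$ with $S\cap Z_p^*(G)$. Then we produce a complement by a coprime-action argument carried out inside a suitable $p$-local subgroup.

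\emph{Stage 1: $W_G(S)=S\cap Z_p^*(G)$.} One inclusion is elementary. Let $x\in S\cap Z_p^*(G)$, $g\in G$ with $x^g\in S$, and $N=O_{p'}(G)$. Then $x^g\in xN$, so $x$ and $x^g$ are Sylow-conjugate in $N\gen{x}=N\gen{x^g}$. A Frattini-type argument inside $N S$ then forces $x^g=x$, using that $S\cap N=1$ and that $C_{N}(x)$ acts transitively on the relevant Sylow subgroups.

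The reverse inclusion $W_G(S)\le Z_p^*(G)$ is exactly the $Z^*$-theorem:
\begin{itemize}
\item for $p=2$ this is Glauberman's $Z^*$-theorem;
\item for odd $p$ it is the $Z_p^*$-theorem, which is known via the classification (Guralnick--Robinson and others);
\item for solvable $G$ it is elementary, by Hall--Higman type arguments.
\end{itemize}
Under hypothesis (2) no $Z^*$-theorem is needed in the end. There we work with $Q=O_{2',2}(G)$ in place of $Z_p^*(G)$ in Stage 2, which handles it directly because $Z(S)\le Q$.

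\emph{Stage 2: the splitting.} Set $\bar G=G/N$. Then $S$ maps isomorphically onto $\bar S$, $W:=W_G(S)$ maps into $Z(\bar G)$, and $Z(S)$ maps into $Z(\bar S)$. Let $Q=O_p(\bar G)\bar S\cap$ (preimage of $O_{p',p}$), so that $\bar S\le N_{\bar G}(\bar S\cap O_{p',p})$. The key step is to find a subgroup $H\le \bar G$ with the following properties:
\begin{itemize}
\item $\bar S$ is a normal Sylow subgroup of $H$, so that \eqref{S normal splitting} gives $Z(\bar S)=C_{Z(\bar S)}(H)\times[Z(\bar S),H]$;
\item $H$ is large enough that $C_{Z(\bar S)}(H)\cap\overline{Z(S)}=\bar W$.
\end{itemize}
The natural choice is $H=N_{\bar G}(\bar S)$. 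Intersecting the displayed decomposition with $\overline{Z(S)}$ does not directly give a direct factor, so we instead use the $H$-invariant decomposition together with the fact that $\overline{Z(S)}$ is $H$-invariant. Coprime action of $H/\bar S$ on the abelian group $\overline{Z(S)}$ gives
\[
\overline{Z(S)}=C_{\overline{Z(S)}}(H)\times[\overline{Z(S)},H].
\]
It then remains to show that $C_{Z(S)}(N_G(S))=W$. This follows from Burnside's fusion theorem, which says that fusion of elements of $Z(S)$ is controlled by $N_G(S)$: an element of $Z(S)$ fixed by $N_G(S)$ is therefore weakly closed. Pulling the decomposition back to $S$ gives $Z(S)=W\times[Z(S),N_G(S)]$.

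\emph{Main obstacle.} The expected difficulty is that the final paragraph seems to prove the theorem with no hypothesis at all, while the abstract says there are counterexamples for $p=2$. The bug must be that Burnside's theorem controls fusion only of elements of $Z(S)$ among conjugates lying in $Z(S)$. A weakly closed element must avoid all conjugates lying in $S$, not just those lying in $Z(S)$. So $C_{Z(S)}(N_G(S))$ can be strictly larger than $W$, and the real work is to show that $W$ still has a complement in $C_{Z(S)}(N_G(S))$.

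For this step the plan is to use Stage 1: in $\bar G$, $\bar W=\bar S\cap Z(\bar G)$ is central. I would first try the transfer $V\colon \bar G\to \bar S/\bar S'$. Its restriction to $\bar W$ is $w\mapsto w^{|G:S|}$, which is injective on $\bar W$ provided $\bar W\cap \bar S'$ is handled. Combining this transfer with Gaschütz's theorem should yield a complement to $\bar W$ in $C_{\overline{Z(S)}}(H)$. The case distinction (1)--(3) enters precisely in guaranteeing, via the $Z^*$-theorem, that $\bar W$ is central in all of $\bar G$ and not merely in $N_{\bar G}(\bar S)$, which is what the transfer argument needs.
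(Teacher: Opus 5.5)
\textbf{Verdict: there is a genuine gap.} It is in your last step, which is where the whole content of the theorem lies. Your decomposition $Z(S)=C_{Z(S)}(N_G(S))\times[Z(S),N_G(S)]$ is correct. You also rightly observe that $C_{Z(S)}(N_G(S))$ can be strictly larger than $W=W_G(S)$, so $W$ still has to be split off inside it.

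For that step you propose the transfer $V\colon\bar G\to\bar S/\bar S'$ together with Gasch\"utz's theorem, with the hypotheses entering only to make $\bar W$ central in $\bar G$. That cannot be the mechanism. For $p=2$ the $Z^*$-theorem makes $\bar W$ central for \emph{every} $G$ (and for odd $p$ the $Z_p^*$-theorem does the same, using the classification). So the hypotheses would play no role at all.

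The paper's counterexample shows the plan fails outright. Take $G=A_6\gen{a}$ with $a$ of order $4$ acting as $(5,6)$, $S=D_8\times\gen{a}$, and $z=(1,2)(3,4)$. Then:
\begin{itemize}
\item $O_{2'}(G)=1$ and $N_G(S)=S$;
\item $W=\gen{a^2}\le Z(G)$;
\item $S'=\gen{z}$, so $W\cap S'=1$;
\item $V(a^2)=a^{90}S'=a^2S'\neq 1$, since $|G:S|=45$.
\end{itemize}
Every proviso in your plan holds, yet $Z(S)=\gen{z}\times\gen{a}$ does not split over $\gen{a^2}$, because $a^2$ is a square. The underlying defect is that a splitting needs a retraction: a homomorphism on $Z(S)$ whose image equals the image of $W$. $V$ does not provide one, since $V(a)^2=V(a^2)\neq 1$, so $V(a)$ has order $4$ and lies outside $V(W)$. (Separately, $V$ need not be injective on $W$: for $G=SL_2(3)$ and $p=2$ one has $W=S'$.) Gasch\"utz's theorem concerns complements to normal abelian subgroups of $G$ and does not repair this. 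Also, your remark that case (2) is handled by working with $O_{2',2}(G)$ is never carried out; the hypothesis $Z(S)\le O_{2',2}(G)$ is not used anywhere in Stage 2.

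\textbf{The missing idea.} Choose a subgroup $H\ge S$ with (a) $W_H(S)=W_G(S)$ and (b) $Z(S)\le O_{p',p}(H)$, and use the transfer on fixed points of a module rather than the group transfer.
\begin{itemize}
\item Pass to $H/O_{p'}(H)$. This is harmless, since $Z(S)$ and $W_H(S)$ are unchanged.
\item Then $M=Z(O_p(H))$ is an $H$-module with $C_M(S)=Z(S)$ and $C_M(H)=Z(H)=W_H(S)$.
\item So $\tr_S^H\colon C_M(S)\to C_M(H)$ sends all of $Z(S)$ into $W_H(S)$ and restricts to the $|H:S|$-th power map on $W_H(S)$. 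Hence $Z(S)=W_H(S)\times\ker\tr_S^H$.
\end{itemize}
For $H=G$ in the $A_6$ example, condition (b) is exactly what fails, since $O_2(G)=\gen{a^2}$. The choice $H=N_G(S)$ satisfies (b) but not (a), which is precisely the failure you detected.

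The hypotheses are what supply such an $H$:
\begin{itemize}
\item Under (2), take $H=G$.
\item Case (3) reduces to (2): if $\bar G$ is $p$-solvable with $O_{p'}(\bar G)=1$, then $C_{\bar G}(O_p(\bar G))\le O_p(\bar G)$, which forces $Z(\bar S)\le O_p(\bar G)$.
\item For odd $p$, take $H=N_G(J(S))$. Then $Z(S)\le J(S)\le O_p(H)$, and Glauberman's Theorem~14.4 (1971) gives $W_G(S)=W_H(S)$.
\end{itemize}
This route makes your Stage 1 unnecessary. In particular it avoids the classification-dependent $Z_p^*$-theorem for odd $p$.
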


The proof of Theorem~\ref{thm WGS} is divided into two parts.
In each case, we first show that $Z(S)$ splits over $W_H(S)$ for a well-chosen subgroup $H$ of $G$ containing $S$ and having $Z(S) \leq O_{p',p}(H)$, by reducing to the case $Z(S) \leq O_p(H)$ and using the transfer map $\tr_S^H$ on fixed points.
Then we show that $W_G(S) = W_H(S)$. 
In each case, 
\[
Z(S) = \ker(\tr_S^H) \times W_G(S) \text{ and } \im(\tr_S^H) = W_G(S). 
\]
Case (3) is a special case of case (2). In case (1) we use a result \cite[Theorem~14.4]{Glauberman1971} of the first author to identify weakly closed elements. 
Our counterexamples for $p = 2$ and non-solvable groups are inspired by our previous work \cite{GlaubermanLynd2016, GlaubermanLynd2021} on the relationship between automorphisms of a group and of its fusion system. 

We note that 
\[
W_G(S) = Z(S) \cap Z_p^*(G) 
\] 
by the $Z_p^*$-theorem \cite{Glauberman1966}, \cite[Remark~7.8.3]{GLS3}. 
This requires the Classification of Finite Simple Groups when $p$ is odd, but none of our results depend on this classification.

The reader familiar with fusion systems will realize that Theorem~\ref{thm WGS} is really a statement about the fusion system $\F_S(G)$ of the group $G$ on the Sylow subgroup $S$ \cite{AschbacherKessarOliver2011}.
The argument used to prove Theorem~\ref{thm WGS} works just as well in the more general setting of saturated fusion systems.
\begin{theorem}
Let $p$ be a prime and $\F$ a saturated fusion system on the finite $p$-group $S$.
If
\begin{enumerate}
\item $p$ is odd, or
\item $Z(S) \leq O_p(\F)$,
\end{enumerate}
then $Z(\F)$ is a direct factor of $Z(S)$. 
\label{thm ZF}
\end{theorem}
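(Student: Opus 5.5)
The plan mirrors the group case. We first treat case (2), then reduce case (1) to a situation like case (2) inside a suitable subsystem.

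\textbf{Case (2).} Put $Q = O_p(\F)$, so that $Z(S) \leq Q$. Every morphism of $\F$ extends to an automorphism of $Q$. Let $A = \Aut_\F(Q)$. Since $Z(S) \leq Q$ and $C_S(Q) \leq C_S(Z(S))$, we get $Z(Q) \geq Z(S)$. More importantly, $Z(S) = C_{Z(Q)}(\Aut_S(Q))$ is the fixed-point set of the Sylow $p$-subgroup $\Aut_S(Q)$ of $A$ acting on the abelian group $Z(Q)$.

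Define the transfer
\[
\tr \colon Z(Q)^{\Aut_S(Q)} \to Z(Q)^A, \qquad z \mapsto \prod_{\alpha} z^\alpha,
\]
where $\alpha$ runs over coset representatives of $\Aut_S(Q)$ in $A$. Restricted to $Z(Q)^A$, the map $\tr$ is the $|A:\Aut_S(Q)|$-th power map, which is an automorphism since the index is prime to $p$. Hence
\[
Z(S) = \ker(\tr) \times Z(Q)^A .
\]
It then remains to check that $Z(Q)^A = Z(\F)$. One inclusion holds because every morphism of $\F$ extends to $Q$. For the other, an element $z\in Z(S)$ fixed by $A$ is fixed by every $\F$-morphism, since each such morphism extends to an element of $A$; in other words, $\F = N_\F(Q)$ is generated by $A$ together with restrictions. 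Thus $z \in Z(\F)$.

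\textbf{Case (1).} Here $p$ is odd, and the idea is to choose a saturated subsystem playing the role of $H$. I would take $\mathcal{H} = N_\F(K)$ for a suitable characteristic subgroup $K$ of $S$. The natural choice is $K = W(S)$ or $Z(J(S))$, following Glauberman's Theorem~14.4 in the fusion-system form, which is valid for odd $p$.

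The choice must ensure two things. First, $Z(S) \leq O_p(\mathcal{H})$; for this, $Z(S)$ should lie in (or centralize and be controlled by) $K$, which is automatic for $K \trianglelefteq \mathcal{H}$ if we take $K$ containing $Z(S)$, for instance $K = Z(S)K'$, still characteristic. Second, we need $Z(\mathcal{H}) = Z(\F)$. Case (2) applied to $\mathcal{H}$ then gives
\[
Z(S) = \ker \times Z(\mathcal{H}),
\]
and the conclusion follows.

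The main obstacle is the equality $Z(\mathcal{H}) = Z(\F)$, i.e.\ that an element of $Z(S)$ fixed by $N_\F(K)$ is fixed by all of $\F$. This is exactly where Glauberman's theorem is needed. For odd $p$ there is a characteristic subgroup $K$ such that $N_\F(K)$ controls fusion of elements of $Z(S)$ into $S$; equivalently, the $Z$-version of the ZJ-theorem asserts that the central elements weakly closed in $N_\F(K)$ are weakly closed in $\F$.

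I would first try $K = Z(J(S))$ together with Alperin's fusion theorem. For $z \in Z(\mathcal{H})$ and an $\F$-essential subgroup $P$, we need $\Aut_\F(P)$ to fix $z$. Glauberman's theorem (as in the Thompson-style replacement argument) lets us push any such automorphism into $N_\F(K)$, or shows that it centralizes $z$, when $p$ is odd; this is the step I expect to carry the weight. The failure of this step for $p=2$ is consistent with the counterexamples.
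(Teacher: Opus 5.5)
Your skeleton is the paper's. Case (2) is the same transfer argument on the $\mathbb{Z}_{(p)}[\Aut_\F(Q)]$-module $Z(Q)$ with $Q=O_p(\F)$. Two small fixes there: the reason for $Z(S)\le Z(Q)$ is simply $Z(S)\le Q\cap C_S(Q)$, and $\Aut_S(Q)$ is Sylow in $\Aut_\F(Q)$ because $Q\trianglelefteq S$ is fully normalized. Case (1) is meant to follow by applying case (2) to $N_\F(K)$ for some $K$ characteristic in $S$; this subsystem is saturated on $S$ by Puig's theorem, which you should say.

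\textbf{The gap: case (1) is never closed.} You leave $Z(N_\F(K))=Z(\F)$ as the ``main obstacle'' and only gesture at an Alperin/replacement argument, which does not establish it. The paper does not reprove this step either. It takes $K=J(S)$, observes $Z(S)\le C_S(J(S))=Z(J(S))\le J(S)\le O_p(N_\F(J(S)))$, and quotes the fusion-system analogue of Theorem~\ref{control of wc}, \cite[Theorem~4.1]{DiazGlesserMazzaPark2009}: for $p$ odd, $Z(\F)=Z(N_\F(J(S)))$. That citation is the missing ingredient.

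Your choice $K=Z(J(S))$ also works, but only as a consequence of the same theorem. For a saturated system on $S$, the center is the set of elements with no other conjugates, so passing to a saturated subsystem on $S$ can only enlarge the center. Since $N_\F(J(S))\subseteq N_\F(Z(J(S)))\subseteq\F$, this gives $Z(\F)\le Z(N_\F(Z(J(S))))\le Z(N_\F(J(S)))=Z(\F)$. Moreover $Z(S)\le Z(J(S))$ already, so no enlargement $K=Z(S)K'$ is needed. The unspecified alternative $K=W(S)$ should be dropped.

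\textbf{The odd-$p$ input is misstated.} It concerns weak closure only. It is not equivalent to $N_\F(K)$ controlling $\F$-fusion of elements of $Z(S)$ into $S$, and that stronger statement is false. Take $p=3$ and $G=(C_3\times C_3)\rtimes\mathrm{SL}_2(3)$. Then $J(S)=S$ is extraspecial of order $27$, and a generator $z$ of $Z(S)$ is $G$-conjugate to noncentral elements of $O_3(G)\le S$. However, $N_G(J(S))=N_G(Z(J(S)))$ only maps $z$ to $z^{\pm1}$. What is true, and all that is needed, is that an element of $Z(S)$ is weakly closed in $\F$ if and only if it is weakly closed in $N_\F(J(S))$.
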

Here, $Z(\F)$ is the center of the fusion system, the subgroup of $Z(S)$ consisting of those elements $z$ such that any morphism in the fusion system has an extension defined on and fixing $z$. 
A short proof shows that $Z(\F) = W_G(S)$ when $\F$ is the fusion system $\F_S(G)$ of a finite group $G$ \cite[Lemma~I.4.2]{AschbacherKessarOliver2011}. 

We prove Theorems~\ref{thm WGS} and \ref{thm ZF} in Section~\ref{proofs}. 

We are especially pleased that this result about splitting at the ``bottom'' of a $p$-group $S$ is related to a joint transfer theorem of I. M. Isaacs with S. M. Gagola \cite{GagolaIsaacs2008} about splitting at the ``top''. 
This article is dedicated to the memory of I. M. Isaacs, in appreciation of many years of friendship, help, and encouragement as a classmate, colleague, and friend of the first author.

\section{Proofs and example}
\label{proofs}
The first lemma can be viewed as a consequence of the fact that the quotient map $G \to G/O_{p'}(G)$ induces an isomorphism $\F_S(G) \cong \F_S(G/O_{p'}(G))$ of fusion systems \cite[Theorem~2.12]{Linckelmann2007}.
We give some background on fusion systems a little farther down.

\begin{lemma}
Let $N$ be a normal $p'$-subgroup of the finite group $H$. 
The quotient map $H \to H/N$ induces isomorphisms $Z(S) \cong Z(SN/N)$ and $W_H(S) \cong W_{H/N}(SN/N)$. 
\label{weakly closed mod Op'}
\end{lemma}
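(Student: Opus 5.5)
Throughout, write $\bar H = H/N$ and let $x \mapsto \bar x$ denote the quotient map. The plan is to prove the two isomorphisms in turn, using the quotient map restricted to $S$. Since $S \cap N = 1$ ($N$ is a $p'$-group), the restriction $S \to SN/N = \bar S$ is an isomorphism. Hence it maps $Z(S)$ isomorphically onto $Z(\bar S)$, which is the first claim. Also $\bar S$ is a Sylow $p$-subgroup of $\bar H$, so $W_{\bar H}(\bar S)$ makes sense. It then suffices to show that, for $x \in S$, the element $x$ is weakly closed in $S$ with respect to $H$ if and only if $\bar x$ is weakly closed in $\bar S$ with respect to $\bar H$. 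The isomorphism $S \to \bar S$ then carries $W_H(S)$ onto $W_{\bar H}(\bar S)$.

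\emph{$\bar x$ weakly closed implies $x$ weakly closed.} Suppose $\bar x$ is weakly closed and $h \in H$ with $x^h \in S$. Then $\bar x^{\bar h} \in \bar S$, so $\overline{x^h} = \bar x$. Since both $x^h$ and $x$ lie in $S$ and $S \to \bar S$ is injective, we get $x^h = x$.

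\emph{$x$ weakly closed implies $\bar x$ weakly closed.} This is the only real step. Suppose $x$ is weakly closed and $\bar x^{\bar h} \in \bar S$ for some $h \in H$. Then $x^h \in SN$, and the plan is to pull it back into $S$ without moving its image in $\bar H$.

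\begin{itemize}
\item The element $x^h$ is a $p$-element of $SN$. Since $S$ is a Sylow $p$-subgroup of $SN$, Sylow's theorem gives $\langle x^h\rangle^{m} \leq S$ for some $m \in SN$.
\item Write $m = sn$ with $s \in S$ and $n \in N$. Replacing $m$ by $s^{-1}m = n$ keeps the conjugate inside $S$, so we may take $m = n \in N$.
\item Now $x^{hn} \in S$, so weak closure of $x$ gives $x^{hn} = x$.
\item Since $\bar n = 1$, it follows that $\bar x^{\bar h} = \overline{x^{hn}} = \bar x$.
\end{itemize}

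This proves the equivalence, and with it $W_H(S) \cong W_{\bar H}(\bar S)$.

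The main point to watch is the step in the second direction that adjusts the Sylow conjugator into $N$; everything else is formal.
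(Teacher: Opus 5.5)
Your proof is essentially the paper's argument. One direction uses $S \cap N = 1$, i.e.\ injectivity of $S \to SN/N$, to get $x^h = x$ from $\overline{x^h} = \bar x$. The other uses Sylow's theorem in $SN$ to find a conjugator in $N$ that moves $x^h$ into $S$, and then weak closure of $x$ together with $\bar n = 1$ finishes.

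There is one small correction, in exactly the step you flagged. With your convention $A^m = m^{-1}Am$, writing $m = sn$ and passing to $s^{-1}m = n$ does not work. The reason is that $\langle x^h\rangle^{s^{-1}m} = (\langle x^h\rangle^{s^{-1}})^m$ need not lie in $S$; for example, in $S_3$ with $N = A_3$ and $|S| = 2$ it fails for a suitable choice of $m$. Instead write $m = ns$, which is possible because $SN = NS$, and use $ms^{-1} = n$. Then $\langle x^h\rangle^{n} = (\langle x^h\rangle^{m})^{s^{-1}} \le S$, as required.
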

\begin{proof}
The statement on centers follows from the isomorphism $SN/N \cong S$ as $N$ is of order prime to $p$. 
If $zN \in W_{H/N}(S)$ and $h \in H$ is such that $z^h \in S$, then $(zN)^{hN} \in SN/N$ and so $(zN)^{hN} = zN$.
Thus, $[z,h] = z^{-1}z^h \in S \cap N = 1$, so $z^h = z$ and $z \in W_H(S)$.

Conversely, if $z \in W_H(S)$ has $(zN)^{hN} \in SN/N$, then $z^h \in SN$. 
So by Sylow's Theorem there is some $x \in N$ such that $z^{hx} \in S$ and hence $z^{hx} = z$.
Therefore, $(zN)^{hN} = zN$, and this shows that $zN \in W_{H/N}(SN/N)$. 
\end{proof}

The next proposition handles case (2) of Theorem~\ref{thm WGS}. 
If $H$ is a finite group with subgroup $K$ and $M$ is an $H$-module with left action $m \mapsto { }^hm$, we use $\tr_K^H \colon C_M(K) \to C_M(H)$ for the usual transfer map on fixed points, given by
\[
\tr_K^H(m) = \sum_{t \in [H/K]} { }^tm,
\]
the sum over a set of representatives for the left cosets of $K$ in $H$. 

\begin{proposition}
Let $H$ be a finite group, $p$ a prime, and $S$ a Sylow $p$-subgroup of $H$. 
If $Z(S) \leq O_{p',p}(H)$, then $Z(S)$ splits over $W_H(S)$. 
\label{constrained case}
\end{proposition}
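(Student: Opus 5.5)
The plan is to reduce to the case $O_{p'}(H)=1$ using Lemma~\ref{weakly closed mod Op'}, and then to split $Z(S)$ by the transfer map on fixed points.

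\emph{Reduction.} Put $N=O_{p'}(H)$ and $\bar H=H/N$. The quotient map carries $S$ isomorphically onto the Sylow $p$-subgroup $\bar S=SN/N$ of $\bar H$. By Lemma~\ref{weakly closed mod Op'}, this single isomorphism identifies $Z(S)$ with $Z(\bar S)$ and $W_H(S)$ with $W_{\bar H}(\bar S)$. So it suffices to prove the statement for $\bar H$. Since $O_{p'}(\bar H)=1$, we have $O_{p',p}(\bar H)=O_p(\bar H)$, and the hypothesis becomes $Z(\bar S)\le O_p(\bar H)$. Dropping bars, we may assume $O_{p'}(H)=1$ and $Z(S)\le P:=O_p(H)$.

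\emph{Setting up the module.} Since $P\le S$ and $Z(S)$ centralizes $S$, we get $Z(S)\le Z(P)$. Let $M=Z(P)$, an abelian normal $p$-subgroup of $H$, viewed as an $H$-module under conjugation. Then $C_M(S)=Z(P)\cap C_H(S)$. Since $Z(P)\le P\le S$, this equals $Z(P)\cap Z(S)=Z(S)$. So we have the transfer $\tr_S^H\colon Z(S)\to C_M(H)$.

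\emph{Identifying the image.} First, $C_M(H)=W_H(S)$:
\begin{itemize}
\item Any element of $C_M(H)$ is central in $H$, hence weakly closed.
\item Conversely, let $z\in W_H(S)$ and $h\in H$. Then $z\in Z(S)\le M$, and $M$ is normal in $H$, so $z^h\in M\le S$. Weak closure forces $z^h=z$, so $z\in C_M(H)$.
\end{itemize}
Next, on $C_M(H)$ the transfer is just $z\mapsto z^{n}$ with $n=|H:S|$. Since $n$ is prime to $p$, this is an automorphism of the $p$-group $C_M(H)$. Hence $\tr_S^H$ maps $Z(S)$ onto $C_M(H)$, and its restriction to $C_M(H)=W_H(S)$ is bijective. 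The standard splitting argument then gives
\[
Z(S)=\ker(\tr_S^H)\times W_H(S).
\]

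The only step needing real care is the reduction. One must check that Lemma~\ref{weakly closed mod Op'} transports the direct decomposition, which holds because both identifications come from the same isomorphism $S\to SN/N$. One must also check that the hypothesis on $O_{p',p}(H)$ passes to $O_p$ of the quotient, which holds by the definition of $O_{p',p}$. The key observation making the transfer argument work is that $Z(S)$ is the full fixed-point module $C_{Z(P)}(S)$.
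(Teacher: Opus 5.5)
Your proof is correct and follows the paper's argument: you reduce to $O_{p'}(H)=1$ via Lemma~\ref{weakly closed mod Op'}, take $M=Z(O_p(H))$ so that $C_M(S)=Z(S)$ and $C_M(H)=W_H(S)$, and split $Z(S)$ using that $\tr_S^H\res_S^H$ is the $|H:S|$-th power map (the paper packages this last step as \cref{formal ker tr}). You also verify $C_M(S)=Z(S)$ and $C_M(H)=W_H(S)$ explicitly, which the paper only asserts.
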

\begin{proof}
By Lemma~\ref{weakly closed mod Op'}, we may as well assume that $O_{p'}(H) = 1$. 
Let $Q = O_p(H)$ and $M = Z(Q)$. 
Then $M$ is a $\mathbb{Z}_{(p)}[H]$-module via conjugation. 
The assumptions that $O_{p'}(H) = 1$ and $Z(S) \leq O_p(H)$ give $C_M(S) = Z(S)$ and $C_M(H) = W_H(S) = Z(H)$. 
The composite $\tr_S^H\res_S^H \colon C_M(H) \to C_M(S) \to C_M(H)$ raises elements to the power $|H:S|$, prime to $p$.
So the composite is an isomorphism, and it then follows for formal reasons (e.g., \cref{formal ker tr} below) that $C_M(S) = \im(\res_S^H) \times \ker(\tr_S^H) = W_H(S) \times \ker(\tr_S^H)$. 
\end{proof}

The proof of \cref{constrained case} needed the next lemma, whose proof is easy and omitted. 
\begin{lemma}
\label{formal ker tr}
Let $M \xra{r} N \xra{t} M$ be homomorphisms of modules for a ring. If $t \circ r$ is an isomorphism,
then $N = \im(r) \oplus \ker(t)$. 
\end{lemma}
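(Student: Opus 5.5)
The plan is to show that every element of $N$ decomposes uniquely as an element of $\im(r)$ plus an element of $\ker(t)$. Set $\phi = t \circ r$, which is an automorphism of $M$ by hypothesis. Then define the endomorphism
\[
e = r \circ \phi^{-1} \circ t \colon N \to N .
\]
The whole proof rests on checking that $e$ is an idempotent whose image is $\im(r)$ and whose kernel is $\ker(t)$. After that, the standard splitting $N = \im(e) \oplus \ker(e)$ of a module along an idempotent finishes the argument.

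The verifications go in the following order.
\begin{enumerate}
\item \emph{Idempotence.} We have $e^2 = r\phi^{-1}(t r)\phi^{-1}t = r\phi^{-1}\phi\phi^{-1}t = e$.
\item \emph{Image.} Clearly $\im(e) \subseteq \im(r)$. Conversely, for $m \in M$ we get $e(r(m)) = r\phi^{-1}\phi(m) = r(m)$, so $\im(r) \subseteq \im(e)$.
\item \emph{Kernel.} Certainly $\ker(t) \subseteq \ker(e)$. Conversely, if $e(n) = 0$, apply $t$ to get $\phi\phi^{-1}t(n) = t(n) = 0$, so $\ker(e) \subseteq \ker(t)$.
\item \emph{Splitting.} Write $n = e(n) + (n - e(n))$. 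Here $n - e(n) \in \ker(e)$ by idempotence. If $x \in \im(e) \cap \ker(e)$, then $x = e(x) = 0$.
\end{enumerate}
Together these give $N = \im(r) \oplus \ker(t)$. As a by-product, $r$ is injective, since $\phi$ is.

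There is no real obstacle here. The only point needing care is that $\phi^{-1}$ is a module homomorphism, so that $e$ is a module endomorphism and not just a map of sets; this holds because the inverse of a bijective module homomorphism is again a homomorphism. For the application in \cref{constrained case}, one also notes that $\phi$ is multiplication by $|H:S|$. That map is invertible on the $p$-group $C_M(H)$, regarded as a $\mathbb{Z}_{(p)}$-module, so the hypothesis of the lemma holds there.
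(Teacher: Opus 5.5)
Your proof is correct: $e = r\circ(t\circ r)^{-1}\circ t$ is an idempotent endomorphism of $N$ with image $\im(r)$ and kernel $\ker(t)$, and each of your verifications checks out, including the remark about applying the lemma in \cref{constrained case}. The paper omits this proof as easy, and your projection argument is the standard one it presumably has in mind.
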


When $S$ is a finite $p$-group, define $d(S)$ to be the maximum order of an abelian subgroup of $S$, and $\mathcal{A}(S)$ to be the set of abelian subgroups of $S$ of order $d(S)$. 
In this article, the Thompson subgroup $J(S)$ of $S$ is the subgroup generated by $\mathcal{A}(S)$. 
(This is one of several related subgroups called the ``Thompson subgroup'' in the literature.)

\begin{theorem}[{\cite[Theorem~14.4]{Glauberman1971}}]
\label{control of wc}
Fix a finite group $G$, a Sylow $p$-subgroup $S$ of $G$, and set $H = N_G(J(S))$. 
If $p$ is odd, then $W_G(S) = W_H(S) = O_p(Z(H))$. 
\end{theorem}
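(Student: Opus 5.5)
The plan has two parts. The identification $W_H(S) = O_p(Z(H))$ is formal. The equality $W_G(S) = W_H(S)$ is where $p$ odd is used.

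\emph{Step 1: $W_H(S) = O_p(Z(H))$.} Every $A \in \mathcal{A}(S)$ is a maximal abelian subgroup of $S$, so $Z(S) \leq A$. Hence $Z(S) \leq J(S)$. Since $J(S)$ is a normal $p$-subgroup of $H = N_G(J(S))$, we get $Z(S) \leq J(S) \leq O_p(H) \leq S$. Now take $z \in W_H(S) \leq Z(S)$ and any $h \in H$. Then $z^h \in O_p(H) \leq S$, so weak closure gives $z^h = z$, and therefore $z \in Z(H)$. Conversely, $O_p(Z(H)) \leq S$ by Sylow's theorem, and each of its elements has only itself as an $H$-conjugate, so it lies in $W_H(S)$.

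\emph{Step 2: $W_G(S) \leq W_H(S)$.} This is immediate because $H \leq G$.

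\emph{Step 3: $W_H(S) \leq W_G(S)$.} This is the real content. Fix $z \in O_p(Z(H))$ and suppose $z^g \in S$ with $z^g \neq z$. I would argue by induction on $|G|$.

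First I would use Alperin's fusion theorem to factor the conjugation $z \mapsto z^g$ as a chain $z = z_0 \mapsto z_1 \mapsto \cdots \mapsto z_n = z^g$. Each step is induced by an element of $N_G(R_i)$, where $R_i \leq S$ is a tame (fully normalized, $N_S(R_i)$ Sylow in $N_G(R_i)$) subgroup containing $z_{i-1}$. Choose $g$ so that the first step with $z_i \neq z_{i-1}$ has $|R_i|$ as large as possible.

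Set $L = N_G(R)$ and $T = N_S(R)$. Since $z \in Z(S)$, we have $z \in Z(T)$. If $L < G$, induction gives $W_L(T) = W_{N_L(J(T))}(T)$. The task then becomes to compare $J(T)$ with $J(S)$.

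If $J(T) = J(S)$, then $N_L(J(T)) \leq H$, so $z$ is weakly closed in $T$ with respect to $L$. That is a contradiction. If $J(S) \not\leq T$, I would push up: pass to $N_S(J(T))$ and use the maximality of $|R|$.

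The case $L = G$ means $R \trianglelefteq G$. There I would replace $G$ by $G/O_{p'}(G)$ using Lemma~\ref{weakly closed mod Op'}, and work in the $p$-constrained group $N_G(R)$.

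\emph{Main obstacle.} The crux is controlling how $J(S)$ behaves under these local steps. This is exactly where Glauberman's replacement theorem for $p$ odd is needed, in the form of a $ZJ$-type argument. The idea is that if a conjugate $A^x$ of some $A \in \mathcal{A}(S)$ normalizes but does not centralize a suitable abelian normal subgroup containing $z$, one can replace it by an abelian subgroup of the same order that lies closer to $S$ and still moves $z$. Iterating this should force the offending fusion to occur already inside $N_G(J(S))$, contradicting $z \in Z(H)$.

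I expect this step to fail for $p = 2$, via $\mathrm{Sym}(4)$-type configurations, consistent with the restriction to $p$ odd. It is the part I would check most carefully, following the weak-closure arguments of \cite{Glauberman1971}.
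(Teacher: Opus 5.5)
Your Steps 1 and 2 are correct, but they are only the formal part of the statement. The paper gives no argument here; it quotes the whole result from Glauberman. Your Step 3, which carries all the content, is a plan rather than a proof, and it has a genuine gap.

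Some of the scaffolding can be repaired:
\begin{enumerate}
\item Maximizing $|R_i|$ does not let the push-up close. The contradiction would need $|J(T)|>|R|$, which can fail, and $N_S(J(T))$ need not be Sylow in $N_G(J(T))$. Instead, choose $(R,x)$ with $z\in R$, $N_S(R)\in\mathrm{Syl}_p(N_G(R))$, $x\in N_G(R)$, $z^x\neq z$, and $|N_S(R)|$ maximal. Then $N_S(J(T))\geq N_S(T)>T$. A second application of Alperin's theorem to $N_S(J(T))$ gives $c$ with $z^c=z$ and $N_S(J(T)^c)$ Sylow in $N_G(J(T)^c)$ of order greater than $|T|$. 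So $(J(T)^c,n^c)$ contradicts maximality.
\item In the case $L=G$, the group $N_G(R)=G$ is not $p$-constrained in general. After factoring out $O_{p'}(G)$, you need a Frattini argument $G=C_G(Q)N_G(S\cap C_G(Q))$, with $Q=O_p(G)$, plus induction to reach $C_G(Q)\leq Q$.
\end{enumerate}

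What remains is the $p$-constrained case, and that is the substance of the theorem. You must show: if $O_{p'}(G)=1$, $C_G(O_p(G))\leq O_p(G)$, and $z\in Z(S)$ is centralized by $N_G(J(S))$, then $z\in Z(G)$. Put $V=\gen{z^G}\leq Z(O_p(G))$. The Frattini argument $G=C_G(V)N_G(S\cap C_G(V))$ reduces this to proving $[V,J(S)]=1$.

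Your ``Main obstacle'' paragraph does not establish this. Replacement only enlarges $A\cap V$, so iterating it ends at a member of $\mathcal{A}(S)$ acting quadratically (possibly trivially) on $V$. It neither shows that all of $J(S)$ centralizes $V$, nor preserves the property of moving $z$. Replacement in fact tends to increase centralization.

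More tellingly, nothing in your sketch breaks at $p=2$, where the conclusion is false. Take $\mathrm{Sym}(4)$ with $S\cong D_8$ and $V=O_2(\mathrm{Sym}(4))$. The offender $\gen{(1,3),(2,4)}\in\mathcal{A}(S)$ already acts quadratically on $V$, so the replacement step is vacuous. Yet $Z(S)=O_2(Z(N_G(J(S))))$ is not weakly closed.

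Quadratic offenders also occur for odd $p$, for instance in $\mathbb{F}_p^2\rtimes\mathrm{SL}_2(p)$, so they cannot simply be ruled out. The argument has to show that such configurations force some element of $N_G(J(S))$ to move $z$. In that example a diagonal element of $\mathrm{SL}_2(p)$ lies in $N_G(S)\leq N_G(J(S))$ and acts on $Z(S)$ as a nontrivial scalar, which is impossible when $p=2$. Supplying this argument in general is Glauberman's contribution, and your proposal leaves it unproved.
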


\begin{proof}[Proof of Theorem~\ref{thm WGS}]
If $G$ is solvable (i.e., (3)), then case (2) holds by \cite[Theorem~6.3.3]{Gorenstein1980}, and 
Proposition~\ref{constrained case} applies if we are in the situation of (2). 
When $p$ is odd, $W_G(S) = W_H(S)$ by Theorem~\ref{control of wc}, and so as $Z(S) \leq ZJ(S) \leq J(S) \leq O_{p',p}(N_G(J(S)))$, we see that (2) implies that $Z(S)$ splits over $W_H(S) = W_G(S)$, which gives (1). 
\end{proof}

For a Sylow $p$-subgroup $S$ of a group $G$, the fusion system $\F_S(G)$ of $G$ on $S$ is the category whose objects
are the subgroups of $S$ and whose morphisms are the functions $\phi \colon P \to Q$ between two such subgroups for which there exists an element $g$ of $G$ such that $\phi(x) = { }^gx$ for every $x \in P$.
The special features of this category were axiomatized by Puig, and later by Broto--Levi--Oliver and Roberts--Shpectorov, into what is now called a saturated fusion system $\F$ on a finite $p$-group $S$ \cite{Puig2006, BrotoLeviOliver2003, RobertsShpectorov2009}.

The conclusions of both Theorem~\ref{thm WGS} and Theorem~\ref{thm ZF} are false without any restriction on $G$ or $\F$. 
One way to produce counterexamples of groups and fusion systems is to start off with a finite simple group having a non-inner automorphism of order 2 that centralizes a Sylow $2$-subgroup. 
Or if working in the fusion system setting, start with a fusion system at the prime $2$ whose centric linking system has a non-inner rigid automorphism of order 2 \cite{GlaubermanLynd2021}.
In fact, we do not know how to construct any counterexamples other than by this method.
We illustrate such a construction with $A_6$ below; see \cite{VillarealRigid} for additional examples of groups and fusion systems $\F$ to which this construction would apply. 
\begin{example}\label{example}
Take $p = 2$ and $H = A_6$. 
Let $\gen{a}$ be cyclic of order $4$ acting on $H$ via conjugation by the transposition $(5,6)$. 
Consider the semidirect product $G = H\gen{a}$ and the Sylow $2$-subgroup $S = \gen{(1,3)(2,4),(1,2)(5,6), a}$ of $G$, set $z = (1,2)(3,4)$, 
and let $\F = \F_S(G)$. 
Then $Z(S) = \gen{z,a} \cong C_2 \times C_4$ does not split over $Z(\F) = W_G(S) = \gen{a^2}$.
\end{example}

Fix a saturated fusion system $\F$ on $S$ and let $P$ be a subgroup of $S$.
An $\F$-\emph{conjugate} of $P$ is just a subgroup isomorphic to $P$ in the category $\F$. 
A subgroup $P$ of $S$ is \emph{fully $\F$-normalized} if $|N_S(P)| \geq |N_S(P')|$ for each $\F$-conjugate $P'$ of $P$ and $\F$-\emph{centric} if $C_S(P') = Z(P')$ for each $\F$-conjugate $P'$ of $P$. 
It is fully \emph{$\F$-automized} if $\Aut_S(P)$ is a Sylow $p$-subgroup of $\Aut_\F(P)$.
One of the axioms for saturation includes the condition that every fully normalized subgroup is fully automized \cite[Theorem~I.2.5(i)]{AschbacherKessarOliver2011}. 
For example, every normal subgroup of $S$ is both fully normalized and fully automized.

The \emph{normalizer} $N_\F(P)$ is the fusion subsystem on $N_S(P)$ consisting of those morphisms $\phi \colon Q \to R$ in $\F$ that extend to a morphism $\tilde{\phi}\colon PQ \to S$ leaving $P$ invariant. 
A normal subgroup $P$ of $S$ is \emph{normal in} $\F$ if $N_\F(P) = \F$.
The unique largest normal subgroup of $\F$ is denoted $O_p(\F)$. 

\begin{proof}[Proof of Theorem~\ref{thm ZF}]
Suppose first that $Z(S) \leq Q := O_p(\F)$, so that $Z(S) \leq Z(Q)$. 
Then $Z(Q)$ is normal in $\F$, so $Z(Q)$ is fully $\F$-normalized. 
Since $\F$ is saturated, $T := \Aut_S(Q)$ is a Sylow $p$-subgroup of $H = \Aut_\F(Q)$, 
and $Z(Q)$ is an $\mathbb{Z}_{(p)}[H]$-module with
\[
Z(S) = C_{Z(Q)}(T) \text{\quad and \quad} Z(\F) = Z(N_\F(Z(Q))) = C_{Z(Q)}(H).
\]
The composite $\tr_T^H \res_T^H \colon Z(\F) \to Z(S) \to Z(\F)$ raises elements of $Z(\F)$ to the power $|H:T|$.
So since $T$ has index prime to $p$ in $H$, again $Z(S) = Z(\F) \times \ker(\tr_T^H)$ by \cref{formal ker tr}. 

Next assume $p$ is odd. 
The Thompson subgroup $J(S)$ is $\F$-centric and normal in $S$. 
By a result of Puig \cite[Theorem~I.5.5]{AschbacherKessarOliver2011}, the normalizer fusion system $N_\F(J(S))$ is a saturated fusion system on $N_S(J(S)) = S$ with
$Z(S) \leq C_S(J(S)) = Z(J(S)) \leq J(S) \leq O_p(N_\F(J(S)))$. 
So by the previous paragraph, we have
\[
Z(S) = Z(N_\F(J(S))) \times \ker(\tr_T^H)
\]
where $T = \Aut_S(J(S))$ and $H = \Aut_\F(J(S))$. 
Finally, \cite[Theorem~4.1]{DiazGlesserMazzaPark2009}, which is a fusion system analogue of Theorem~\ref{control of wc}, shows that $Z(\F) = Z(N_\F(J(S)))$ and hence completes the proof of the theorem. 
\end{proof}

\bibliographystyle{amsalpha}{ }
\bibliography{/home/justin/work/math/research/mybib.bib}
\end{document}